\newtheorem{prelem}{{\bf Theorem}}
 \newtheorem{theorem}{Theorem}
\newtheorem{lemma}[theorem]{Lemma}
\theoremstyle{definition}
\theoremstyle{remark}
\theoremstyle{conjecture}
\title{A new search direction for full-Newton step infeasible interior-point method in linear optimization}
\date{}
\date{}
\author
{B. Kheirfam \\
Department of Applied Mathematics\\ Azarbaijan Shahid Madani
University\\
Tabriz,  Iran\\
 \tt b.kheirfam@azaruniv.ac.ir}
\begin{document}

\maketitle
\begin{abstract}
In this paper, we study an infeasible interior-point method for
linear optimization with full-Newton step. The introduced method
uses an algebraic equivalent transformation on the centering
equation of the system which defines the central path. We prove that
the method finds an $\varepsilon$-optimal solution of the underlying
problem in polynomial time.

\end{abstract}

\noindent{\bf keywords}: Linear optimization, infeasible
interior-point methods, new search directions, polynomial
complexity.\\

\noindent{\bf AMS}: 90C05,  90C51.


\section{Introduction}
Interior-point methods (IPMs) for linear optimization (LO) began
when Karmarkar \cite{karmarkar} published his exceptional paper in
1984. After that, several variants of this algorithm were presented.
In the meantime, we can talk about feasible and infeasible IPMs. In
feasible IPMs we presume that a strictly feasible point is at hand
which the algorithm can be immediately  beginning. Usually find such
a starting point is not simple. In that case an infeasible IPM
(IIPM) should be used. These methods begin from an arbitrary
positive point and try to reach both feasibility and optimality.
IIPMs were first introduced by Lustig \cite{Lustig} and Tanabe
\cite{Tanabe}. The first feasible IPM with full-Newton step for LO
was presented by Roos et al. \cite{Terlaky}. Determining the search
directions plays a very important role in IPMs. In 2003, Darvay
\cite{Darvay} utilizes the AET technique on the centering equation of
the system defining the central path for LO. He uses the square root
function in the AET strategy and then applies the Newton method for obtain
the search directions. This method is extended in
\cite{Achache,WBS,WangBai,Letters}, respectively, to convex
quadratic optimization (CQO), second-order cone optimization (SOCO),
symmetric optimization (SO) and the Cartesian $P_*(\kappa)$ linear
complementarity problem (LCP). Kheirfam and Haghighi \cite{Haghighi}
have proposed an IPM for $P_*(\kappa)$-LCP which uses the function
$\psi(t)=\sqrt{t}/2(1+\sqrt{t})$ in the AET technique. An infeasible
version of the method proposed in \cite{Terlaky} has presented by
Roos in \cite{Roos} which needs a feasibility step and three
centering steps in each main iteration. Some generalizations and
versions of the method can be seen in
\cite{Bulletin,Kheirfamjota,NACO,Liu,Zhang,KheirfamAnnals}. The
author is improved this algorithm so that the algorithm performs
only one feasibility step in each iteration and does not need
centering steps \cite{NewRoos}. Kheirfam \cite{Num,Let,Asian,Num1}
extended the algorithm proposed in \cite{NewRoos} to HLCP, the
Cartesian $P_*(\kappa)$-LCP, the convex quadratic symmetric cone
optimization (CQSCO) and SO. By considering the AET technique based
on the function $\psi(t)=t-\sqrt{t}$, Darvay et al.
\cite{Darvay2016} have introduced a full-Newton step IPM for LO.
Kheirfam \cite{SDLCP} has presented an infeasible version of this
algorithm for SDLCP. Darvay et al. \cite{CPDarvay} published a
corrector-predictor IPM (CP-IPM) for LO using the function
$\psi(t)=t-\sqrt{t}$ for AET. Darvay and Tak\'acs \cite{Takacs}
proposed an IPM for LO based on a new type of AET on the centering
equation of the central path.

Motivated by the aforementioned works, in this paper we aim to
present a full-Newton step IIPM for LO using the AET
$\psi(t)=\psi(\sqrt{t})$ for the centering equation of the central
path. The method uses the function $\psi(t)=t^2$ in order to
determine the new search directions and performs only one
feasibility step in a main iteration. In fact, our method is an
infeasible version of the method proposed in \cite{Takacs}. We prove
that the proposed algorithm enjoys the best-known iteration
complexity for IIPMs.

The paper is organized in the following way. In the next section, we
remember the problem pair (P) and (D). We state the perturbed
problems corresponding to (P) and (D) and then provided the central
path. In Sect. 3, the new search directions based on the new type of
AET using the function $\psi(t)=t^2$ is discussed, and finally the
algorithm is presented. Section 4 consists of the complexity
analysis of the introduced IIPM with the new search directions.
 In Section 5, some concluding remarks are followed.

\section{Preliminaries}\label{sec2}
Let us consider the LO problem in the standard form
\begin{eqnarray*}
(P)~~~~~~~~~\min~\{c^Tx:~Ax=b, ~x\geq0\},~~~~~
\end{eqnarray*}
where $A\in{R}^{m\times n}$ with $rank(A)=m, b\in R^m$ and $c\in
R^n$. The dual of this problem can be written in the following standard
form:
\begin{eqnarray*}
(D)~~~~~~~~~\max~\{b^Ty:~A^Ty+s=c, ~ s\geq0\}.
\end{eqnarray*}
In accordance with the routine of IIPMs, we consider the starting
point $(x^0, y^0, s^0)=\xi(e, 0, e)$ such that $\|(x^*;
s^*)\|_{\infty}\leq\xi$ for  some primal-dual optimal solution
$(x^*, y^*, s^*)$, where $e$ is the all-one vector and $\xi$ is a
positive scalar. It should be noted that for the optimal solution
$(x^*, y^*, s^*)$ the inequality $\|(x^*; s^*)\|_{\infty}\leq\xi$ is
true if and only if
\begin{eqnarray}\label{eq1}
0\leq x^*\leq\xi e,~~0\leq s^*\leq \xi e.
\end{eqnarray}
For an IIPM, a triple $(x, y, s)$ is called an $\varepsilon$-solution
of (P) and (D) if
$$\max\big\{x^Ts, \|b-Ax\|, \|c-A^Ty-s\|\big\}\leq\epsilon,$$
where $\varepsilon$ is a accuracy parameter. Following \cite{Roos}, for
any $0<\nu\leq1$ we consider the perturbed problem pair
${\rm(P_{\nu})}$ and ${\rm (D_{\nu})}$ as follows:
\begin{eqnarray*}
(P_{\nu})~~~~~~~~~\min~\{(c-\nu r^0_c)^Tx:~b-Ax=\nu r_b^0,
~x\geq0\},~~~~~
\end{eqnarray*}
\begin{eqnarray*}
(D_{\nu})~~~~~~~~~\max~\{(b-\nu r^0_b)^Ty:~c-A^Ty-s=\nu r^0_c, ~
s\geq0\},
\end{eqnarray*}
where $r^0_b:=b-A\xi e$ and $r^0_c:=c-\xi e.$ It is simply seen that
$(x^0, y^0, s^0)=\xi(e, 0, e)$ is a feasible solution of the problem
pair ${\rm(P_{\nu})}$ and ${\rm (D_{\nu})}$ if $\nu=1$. We conclude
that if $\nu=1$, then ${\rm(P_{\nu})}$ and ${\rm (D_{\nu})}$ satisfy
the interior point condition (IPC). We recall the following lemma.
\begin{lemma}{\rm(Theorem 5.13 in \cite{Yebook})}\label{lem1}
The original problems, {\rm(P)} and {\rm(D)} are feasible if and
only if for each $\nu$ satisfying $0<\nu\leq1$ the perturbed
problems ${\rm(P_{\nu})}$ and ${\rm(D_{\nu})}$ satisfy the IPC.
\end{lemma}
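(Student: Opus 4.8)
The plan is to prove the two implications of Lemma~\ref{lem1} separately; in each direction the perturbed equality constraints can be rewritten so that the statement reduces to the closedness of a polyhedron (equivalently, to Farkas' lemma).

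For the ``only if'' part, let $\bar x$ be feasible for $(P)$ and $(\bar y,\bar s)$ feasible for $(D)$, so that $A\bar x=b$, $\bar x\ge 0$, $A^T\bar y+\bar s=c$, $\bar s\ge 0$. Fix $\nu\in(0,1]$. Since $r^0_b=b-A\xi e$, the primal perturbed equation $b-Ax=\nu r^0_b$ is equivalent to $Ax=(1-\nu)b+\nu A\xi e$, which is solved by $x_\nu:=(1-\nu)\bar x+\nu\xi e$; as $\bar x\ge 0$, $1-\nu\ge 0$ and $\nu\xi>0$, this gives $x_\nu\ge\nu\xi e>0$. Similarly, since $r^0_c=c-\xi e$, the dual perturbed equation $c-A^Ty-s=\nu r^0_c$ is equivalent to $A^Ty+s=(1-\nu)c+\nu\xi e$, which is solved by $y_\nu:=(1-\nu)\bar y$ and $s_\nu:=(1-\nu)\bar s+\nu\xi e>0$. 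Hence $(x_\nu,y_\nu,s_\nu)$ is a strictly feasible point of $(P_\nu)$ and $(D_\nu)$, i.e., they satisfy the IPC. (At $\nu=1$ this recovers the starting point $\xi(e,0,e)$.)

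For the ``if'' part, assume $(P_\nu)$ and $(D_\nu)$ satisfy the IPC for every $\nu\in(0,1]$; in particular both are feasible along a sequence $\nu_k\downarrow 0$. Feasibility of $(P_{\nu_k})$ means $(1-\nu_k)b+\nu_k A\xi e\in\{Ax:x\ge 0\}$, and feasibility of $(D_{\nu_k})$ means $(1-\nu_k)c+\nu_k\xi e\in\{A^Ty+s:y\in R^m,\ s\ge 0\}$. Both of these sets are polyhedra, hence closed, so letting $\nu_k\to 0$ yields $b\in\{Ax:x\ge 0\}$ and $c\in\{A^Ty+s:s\ge 0\}$; that is, $(P)$ and $(D)$ are feasible.

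I do not expect a genuine obstacle here: the only non-elementary input is the closedness of a finitely generated cone (Farkas' lemma), which enters only in the converse. The direction that matters for the rest of the paper is the first one, and it is worth recording that it produces the \emph{explicit} strictly feasible triple $(x_\nu,y_\nu,s_\nu)$, whose form is reused in the feasibility-step construction and in the complexity analysis; accordingly I would keep the argument short and emphasize that explicit solution rather than the closedness technicality.
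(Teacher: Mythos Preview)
Your argument is correct in both directions. Note, however, that the paper does not supply its own proof of this lemma at all: it simply quotes the result as Theorem~5.13 of \cite{Yebook} and then uses it to assert existence of the perturbed central path. So there is nothing to compare against; your write-up fills in what the paper deliberately omits.

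That said, your forward-direction construction $(x_\nu,y_\nu,s_\nu)=\big((1-\nu)\bar x+\nu\xi e,\ (1-\nu)\bar y,\ (1-\nu)\bar s+\nu\xi e\big)$ is exactly the standard one used in Roos~\cite{Roos,NewRoos}, and it is indeed the device that ultimately lies behind the bound~(\ref{eq20}) on $\|q\|$---the present paper just hides that step inside the citation to~\cite{NewRoos}. Your instinct to highlight this explicit triple is therefore well placed. For the converse, your closedness argument via the polyhedral cones $\{Ax:x\ge 0\}$ and $\mathrm{range}(A^T)+R^n_{\ge 0}$ is clean and correct; a minor observation is that you only use the hypothesis along a sequence $\nu_k\downarrow 0$, so feasibility of $(P_\nu),(D_\nu)$ for all $\nu\in(0,1]$ is more than you need in that direction.
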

In the view of Lemma \ref{lem1}, we assume that the original problem
pair (P) and (D) is feasible and $\nu\in(0, 1]$, the central path of
the perturbed pair ${\rm(P_{\nu})}$ and ${\rm(D_{\nu})}$ exists;
\begin{eqnarray}\label{eq2}
\begin{array}{ccccccc}
b-Ax=\nu r^0_b,~~ x\geq0,\\
c-A^Ty-s=\nu r^0_c,~~ s\geq0,~~~~~~~\\
xs=\mu e,~~~~~~
\end{array}
\end{eqnarray}
has a unique solution $(x(\mu, \nu), y(\mu, \nu), s(\mu, \nu))$ for
every $\mu>0$. This solution consists of the $\mu$-centers of the
perturbed problems ${\rm(P_{\nu})}$ and ${\rm(D_{\nu})}$. Note that
for $x, s>0$ and $\mu>0$ from the third equation of system
(\ref{eq2}) we deduce that
\begin{eqnarray}\label{eq2a}
xs=\mu e\Leftrightarrow \frac{xs}{\mu}=e \Leftrightarrow
\sqrt{\frac{xs}{\mu}}=e \Leftrightarrow
\frac{xs}{\mu}=\sqrt{\frac{xs}{\mu}}.
\end{eqnarray}
Now the perturbed central path can be equivalently stated as follows:
\begin{eqnarray}\label{eq2b}
\begin{array}{ccccccc}
b-Ax=\nu r^0_b,~~ x\geq0,\\
c-A^Ty-s=\nu r^0_c,~~ s\geq0,~~~~~~~\\
\dfrac{xs}{\mu}=\sqrt{\dfrac{xs}{\mu}}.~~~
\end{array}
\end{eqnarray}
In the sequel, the parameters $\mu$ and $\nu$ always satisfy the
relation $\mu=\nu\mu^0=\nu\xi^2$.
\section{New search directions}
In accordance with the Darvay's idea, we consider the function
$\psi$ defined and continuously differentiable on the interval
$(k^2, \infty)$, where $0\leq k<1$, such that
$2t\psi^{'}(t^2)-\psi^{'}(t)>0, \forall t>k^2$. Now, if we apply the
AET method to (\ref{eq2b}), then we get
\begin{eqnarray}
b-Ax=\nu r^0_b,~~ x\geq0,\label{eq3}\\
c-A^Ty-s=\nu r^0_c,~~ s\geq0, \label{eq4}\\
~~~~~~~~~~\psi\Big(\dfrac{xs}{\mu}\Big)=\psi\Big(\sqrt{\dfrac{xs}{\mu}}\Big).~~~\label{eq5}
\end{eqnarray}
Let $(x, y, s)$ be a feasible solution of the perturbed pair
${\rm(P_{\nu})}$ and ${\rm (D_{\nu})}$. We consider the notation
$$f(x, y, s)=\begin{bmatrix}
               \nu^+ r^0_b-b-Ax \\
               \nu^+ r^0_c-c+A^Ty+s \\
               \psi\Big(\dfrac{xs}{\mu}\Big)-\psi\Big(\sqrt{\dfrac{xs}{\mu}}\Big)\\
             \end{bmatrix}=0,$$
where $\nu^+=(1-\theta)\nu$ and $\theta\in(0, 1)$. Applying Newton's
method to this system, we get
$$J_f(x, y, s)\begin{bmatrix}
                \Delta x \\
                \Delta y \\
                \Delta s \\
              \end{bmatrix}=-f(x, y, s),$$
              where $J_f(x, y, s)$ denotes the Jacobian matrix of
              $f$ at $(x, y, s)$. After some computations, we obtain
              the following system:
\begin{eqnarray}\label{eq6}
\begin{array}{ccccccc}
A\Delta x=\theta\nu r^0_b,~~~~~~~~~~\\
A^T\Delta y+\Delta s=\theta\nu r^0_c,~~~~~~~~~~~~~~~~~~~\\
\frac{1}{\mu}\big(s\Delta x+x\Delta
s\big)=\dfrac{-\psi(\frac{xs}{\mu})+\psi(\sqrt{\frac{xs}{\mu}})}{\psi^{'}(\frac{xs}{\mu})-
\frac{1}{2\sqrt{\frac{xs}{\mu}}}\psi^{'}(\sqrt{\frac{xs}{\mu}})}.
\end{array}
\end{eqnarray}
Defining the scaled search directions
\begin{eqnarray}\label{eq7}
 d_x:=\frac{v\Delta x}{x},~~ d_s:=\frac{v\Delta s}{s},~~{\rm
 where}~v=\sqrt{\frac{xs}{\mu}},
\end{eqnarray}
we can  give the scaled form of system (\ref{eq6}):
\begin{eqnarray}\label{eq8}
\begin{array}{ccccccc}
{\bar A}d_x=\theta\nu r^0_b,~~~~~~\\
{\bar A}^T\frac{\Delta y}{\mu}+d_s=\theta\nu vs^{-1}r^0_c,~~~~~~~~~\\
d_x+d_s=p_{v},~~~~~~~~~~~~~
\end{array}
\end{eqnarray}
where
$$p_v:=\frac{2\psi(v)-2\psi(v^2)}{2v\psi^{'}(v^2)-\psi^{'}(v)},~~{\rm and}~~ \bar A:=A{\rm diag}\big(\frac{x}{v}\big).$$
If we use the function $\psi:(\frac{1}{\sqrt2}, \infty)\rightarrow
\mathbb{R}, \psi(t)=t^2$ introduced in \cite{Takacs}, then we obtain
\begin{eqnarray}\label{eq9}
p_v=\frac{v-v^3}{2v^2-e}.
\end{eqnarray}
After a full-Newton step, the new iterate is given by
\begin{eqnarray}\label{eq10}
x_+:=x+\Delta x, ~~~ y_+:=y+\Delta y,~~~ s_+:=s+\Delta s.
\end{eqnarray}
Furthermore, in each iteration of the algorithm, a quantity is
needed to measure how far an iterate is from the central path. We
consider the proximity measure defined by
\begin{eqnarray}\label{eq11}
\delta(v):=\delta(x, s;
\mu)=\frac{\|p_v\|}{2}=\frac{1}{2}\Big\|\frac{v-v^3}{2v^2-e}\Big\|,
\end{eqnarray}
which was first suggested for a feasible IPM in \cite{Takacs}.

Let $q_v=d_x-d_s$. Then
\begin{eqnarray}\label{eq12}
d_x=\frac{p_v+q_v}{2}, ~~~ d_s=\frac{p_v-q_v}{2},~~~
d_xd_s=\frac{p_v^2-q_v^2}{4},
\end{eqnarray}
and
\begin{eqnarray}\label{eq13}
\frac{\|q_v\|^2}{4}=\frac{\|d_x-d_s\|^2}{4}=\frac{\|d_x+d_s\|^2}{4}-d_x^Td_s=\frac{\|p_v\|^2}{4}-d_x^Td_s.
\end{eqnarray}

Suppose that for some $\mu\in(0, \mu^0]$, our algorithm begins from
a feasible solution $(x, y, s)$ of the problem pair ${\rm(P_{\nu})}$
and ${\rm (D_{\nu})}$ with $\nu=\frac{\mu}{\mu^0}$, and such that
$\delta(x, s; \mu)\leq\tau, \tau\in(0, 1)$. Then, the algorithm
finds a feasible solution $(x_+, y_+, s_+)$ of ${\rm(P_{\nu}^+)}$
and ${\rm (D_{\nu}^+)}$, where $\nu^+=(1-\theta)\nu, \theta\in(0,
1)$. In this case, $\mu$ is decreased to $\mu^+=(1-\theta)\mu$ and
such that $\delta(x_+, s_+; \mu^+)\leq\tau$. This procedure is
repeated until an $\varepsilon$-solution is found. We are now in a
position to state the theoretical framework of the infeasible
interior-point algorithm as follows:
$$\begin{array}{cccccc}\hline
 {\rm {\bf Algorithm 1}:an~infeasible~interior-point~algorithm} \\
\hline~~~~~~ {\bf
Input:}~~~~~~~~~~~~~~~~~~~~~~~~~~~~~~~~~~~~~~~~~~~~~~~~~~~~~~~~~~~~~~~~~~~~~~~~~~~~~~~~~~~~~~\\~
{\rm~Accuracy~ parameter~ \varepsilon >0};~~~~~~~~~~~~~~~~~~~~~~~~~~~~~~~~~~~~~~\\
{\rm~barrier~ update~ parameter~ \theta,~0<\theta<1;}~~~~~~~~~~~~~~~~~~~~~\\
{\rm~threshold~parameter~}\tau>0.~~~~~~~~~~~~~~~~~~~~~~~~~~~~~~~~~~~~~\\
{\bf begin}~~~~~~~~~~~~~~~~~~~~~~~~~~~~~~~~~~~~~~~~~~~~~~~~~~~~~~~~~~~~~~~~~~~~~~~~~~~~~~~~~\\
x:=\xi e;~y:=0;~s:=\xi e;~\mu:=\nu\xi^2;~\nu=1;~~~~~~~~~~~~~~~~~~~~~~\\
{\bf while}~ \max(x^Ts, \|r_b\|, \|r_c\|)>\varepsilon~ {\bf do}~~~~~~~~~~~~~~~~~~~~~~~~~~~~~~~~~~~~~~~~\\
{\bf begin}~~~~~~~~~~~~~~~~~~~~~~~~~~~~~~~~~~~~~~~~~~~~~~~~~~~~~~~~~~~~~~~~~~~~~~~~\\
{\rm~solve~ the ~system~(\ref{eq8})~and ~use~(\ref{eq7})~to~obtain}~(\Delta x, \Delta y, \Delta s);\\
(x, y, s):=(x, y, s)+(\Delta x, \Delta y, \Delta s);~~~~~~~~~~~~~~~~~~~~~\\
{\rm~update~of~}\mu ~{\rm and}~\nu:~~~~~~~~~~~~~~~~~~~~~~~~~~~~~~~~~~~~~~~~~~~~~\\
 \mu:=(1-\theta)\mu;~~~~~~~~~~~~~~~~~~~~~~~~~~~~~~~~~~~~~\\
\nu:=(1-\theta)\nu;~~~~~~~~~~~~~~~~~~~~~~~~~~~~~~~~~~~~~\\
{\bf end}~~~~~~~~~~~~~~~~~~~~~~~~~~~~~~~~~~~~~~~~~~~~~~~~~~~~~~~~~~~~~~~~~~~~~~~~~\\
{\bf end}.~~~~~~~~~~~~~~~~~~~~~~~~~~~~~~~~~~~~~~~~~~~~~~~~~~~~~~~~~~~~~~~~~~~~~~~~~~~~~~~~~~\\
\hline
\end{array}$$

\section{Analysis of the algorithm}
Here, we will prove that Algorithm 1 is well-defined. The main goal
of our analysis is to find some values for the parameters $\tau$ and
$\theta$ such that $x_+>0$ and $s_+>0$, and  we have $\delta(x_+,
s_+; \mu^+)\leq\tau.$  In the following  section, we obtain an upper
bound for the proximity measure after an iteration of the algorithm.

\subsection{Upper bound for $\delta(v^+)$}
In the next lemma, we give a condition on the proximity measure which
ensures the feasibility of a full-Newton step. In what follows, we
use the notation $\omega=\frac{1}{2}\big(\|d_x\|^2+\|d_s\|^2\big).$

\begin{lemma}\label{lem3} The iterate $(x_+, y_+, s_+)$ with
$v>\frac{1}{\sqrt2}e$ is strictly feasible if
$\delta(v)^2+\omega<1$.
\end{lemma}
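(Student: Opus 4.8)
The plan is to show that strict feasibility of the full-Newton step reduces to showing that a certain parametrized family of points stays positive, and then to control that family using the hypothesis $\delta(v)^2+\omega<1$. First I would introduce the step-length parameter $\alpha\in[0,1]$ and consider the points $x(\alpha)=x+\alpha\Delta x$ and $s(\alpha)=s+\alpha\Delta s$. Since $x>0$ and $s>0$ at the start of the iteration, the set of $\alpha$ for which $x(\alpha)>0$ and $s(\alpha)>0$ is an open interval containing $0$; to conclude that $\alpha=1$ lies in it, it suffices to prove that $x(\alpha)^Ts(\alpha)>0$, or more precisely that the product $x(\alpha)s(\alpha)$ stays strictly positive componentwise on $[0,1]$, which by a standard connectedness/continuity argument forces both factors to remain positive throughout.

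Next I would compute $x(\alpha)s(\alpha)$ in scaled variables. Using the definitions $d_x=v\Delta x/x$, $d_s=v\Delta s/s$ from (\ref{eq7}) and $v=\sqrt{xs/\mu}$, one gets $x(\alpha)s(\alpha)=\mu\big(v+\alpha d_x\big)\big(v+\alpha d_s\big)$ componentwise, which expands to $\mu\big(v^2+\alpha v(d_x+d_s)+\alpha^2 d_xd_s\big)$. Invoking the third equation of (\ref{eq8}), $d_x+d_s=p_v$, and the identity $d_xd_s=(p_v^2-q_v^2)/4$ from (\ref{eq12}), this becomes $\mu\big(v^2+\alpha v p_v+\tfrac{\alpha^2}{4}(p_v^2-q_v^2)\big)$. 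Completing the square on the $p_v$-terms, I would write this as $\mu\big((v+\tfrac{\alpha}{2}p_v)^2-\tfrac{\alpha^2}{4}q_v^2\big)$, so that componentwise $x(\alpha)s(\alpha)\ge \mu\big((v+\tfrac{\alpha}{2}p_v)^2-\tfrac14 q_v^2\big)$ since $\alpha^2\le\alpha\le1$ is not quite what is needed — rather I keep $\alpha^2$ and bound $\|q_v\|^2$. The key inequality to establish is therefore that each component of $(v+\tfrac{\alpha}{2}p_v)^2$ strictly exceeds $\tfrac14\|q_v\|^2_\infty$, which would follow from a lower bound on $\min_i(v_i+\tfrac{\alpha}{2}(p_v)_i)$ together with an upper bound on $\|q_v\|$.

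The main obstacle will be obtaining these two bounds from the hypothesis $\delta(v)^2+\omega<1$. For the $q_v$ bound, I would use (\ref{eq13}): $\|q_v\|^2/4=\|p_v\|^2/4-d_x^Td_s=\delta(v)^2-d_x^Td_s$, and then bound $|d_x^Td_s|\le\tfrac12(\|d_x\|^2+\|d_s\|^2)=\omega$, giving $\tfrac14\|q_v\|^2\le\delta(v)^2+\omega<1$. For the lower bound on the components of $v+\tfrac{\alpha}{2}p_v$, I would argue that since $\delta(v)=\|p_v\|/2$, each component satisfies $|(p_v)_i|\le2\delta(v)$, and combine this with the componentwise positivity and a careful estimate of $v_i$ using $v>\tfrac{1}{\sqrt2}e$ and the explicit form $p_v=(v-v^3)/(2v^2-e)$. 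Putting the pieces together, for every $\alpha\in[0,1]$ and every index $i$ one gets $x(\alpha)_is(\alpha)_i>0$ provided $\delta(v)^2+\omega<1$, so $x(\alpha)s(\alpha)$ never vanishes on $[0,1]$; since it is continuous and positive at $\alpha=0$, it is positive on all of $[0,1]$, and in particular $x(1)=x_+>0$ and $s(1)=s_+>0$. I would close by noting that $(x_+,y_+,s_+)$ satisfies the affine feasibility equations of ${\rm(P_\nu^+)}$ and ${\rm(D_\nu^+)}$ by the first two equations of (\ref{eq6}), so strict feasibility is proved.
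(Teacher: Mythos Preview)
Your overall structure---introducing $x(\alpha),s(\alpha)$, expanding in scaled variables, bounding $\tfrac14\|q_v\|^2\le\delta(v)^2+\omega$ via (\ref{eq13}), and concluding by continuity---matches the paper's proof. The gap is in your lower bound for the ``square'' part. From the decomposition $(v+\tfrac{\alpha}{2}p_v)^2-\tfrac{\alpha^2}{4}q_v^2$ you need each component of $(v+\tfrac{\alpha}{2}p_v)^2$ to dominate $\tfrac{\alpha^2}{4}(q_v)_i^2$, and your proposed route---using only $|(p_v)_i|\le 2\delta(v)$ together with $v_i>\tfrac{1}{\sqrt{2}}$---yields at best $v_i+\tfrac{\alpha}{2}(p_v)_i\ge \tfrac{1}{\sqrt{2}}-\delta(v)$, which does not beat $\tfrac{\alpha}{2}|(q_v)_i|$ under the sole hypothesis $\delta(v)^2+\omega<1$ (for $\delta(v)$ near $\tfrac{1}{\sqrt{2}}$ the bound is even nonpositive).

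The paper closes this gap with the specific algebraic identity
\[
v^2+vp_v-e\;=\;\frac{(v^2-e)^2}{2v^2-e}\;\ge\;0,
\]
valid precisely because $v>\tfrac{1}{\sqrt{2}}e$; this is exactly where the explicit form $p_v=(v-v^3)/(2v^2-e)$ enters. With it one writes
\[
v^2+\alpha vp_v+\tfrac{\alpha^2}{4}(p_v^2-q_v^2)=(1-\alpha)v^2+\alpha(v^2+vp_v)+\tfrac{\alpha^2}{4}(p_v^2-q_v^2)\ge(1-\alpha)v^2+\alpha e-\tfrac{\alpha^2}{4}q_v^2,
\]
and positivity now follows directly from $\tfrac14\|q_v\|^2<1$. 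Your proposal alludes to ``a careful estimate using the explicit form of $p_v$'' but does not identify this identity, and the bounds you actually state do not suffice; supplying this inequality is the missing step.
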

\begin{proof}
Let $0\leq\alpha\leq1$. We define $x(\alpha):=x+\alpha\Delta x$ and
$s(\alpha):=s+\alpha\Delta s.$ Using (\ref{eq7}), the third equation
of (\ref{eq8}) and (\ref{eq12}) one can find
\begin{eqnarray}\label{eq14}
\frac{x(\alpha)s(\alpha)}{\mu}=\frac{xs}{v^2}(v+\alpha d_x)(v+\alpha
d_s)=v^2+\alpha v(d_x+d_s)+\alpha^2 d_xd_s \nonumber\\
=(1-\alpha)v^2+\alpha(v^2+v p_v)+\alpha^2\Big(\frac{p_v^2-q_v^2}{4}\Big)~~~~~~~~~~~~~~~\\
\geq(1-\alpha)v^2+\alpha^2
e+\alpha^2\frac{p_v^2}{4}-\alpha^2\frac{q_v^2}{4},~~~~~~~~~~~~~~~~~~~~~~~~\nonumber
\end{eqnarray}
where the inequality is due to $\alpha\geq\alpha^2$ and the
following inequality:
\begin{eqnarray}\label{eq15}
v^2+v
p_v-e=v^2+\frac{v^2-v^4}{2v^2-e}-e=\frac{v^4}{2v^2-e}-e=\frac{(v^2-e)^2}{2v^2-e}\geq0.
\end{eqnarray}
The inequality $x(\alpha)s(\alpha)>0$ holds if
\begin{eqnarray*}
\Big\|-\frac{p_v^2}{4}+\frac{q_v^2}{4}\Big\|_{\infty}
\leq\Big\|\frac{p_v^2}{4}\Big\|_{\infty}+\Big\|\frac{q_v^2}{4}\Big\|_{\infty}
\leq\frac{\|p_v\|^2}{4}+\frac{\|q_v\|^2}{4}~~~~~~~~~~~~~~~~~~~~\nonumber\\
={\delta(v)^2}-{d_x^Td_s}\leq{\delta(v)^2}+\|d_x\|\|d_s\|\leq\delta(v)^2+\omega<1,
\end{eqnarray*}
where the equality is due to (\ref{eq13}), the third inequality uses
from the Cauchy-Schwarz inequality and the last inequality holds due
to the assumption of the lemma. Thus, $x(\alpha)s(\alpha)>0$, for
$0\leq\alpha\leq1;$  $x(\alpha)$ and $s(\alpha)$ do not change sign
on the interval $[0, 1]$. Consequently, $x(0)=x>0$ and $s(0)=s>0$
yields $x(1)=x_+>0$ and $s(1)=s_+>0$. Thus, the proof is completed.
\end{proof}
In correspondence to the definition (\ref{eq11}), we have
$$\delta(v_+)=\delta(x_+, s_+; \mu^+)=\frac{1}{2}\Big\|\frac{v_+-v_+^3}{2v^2_+-e}\Big\|,~ {\rm
where}~ v_+=\sqrt{\frac{x_+s_+}{\mu^+}}.$$
\begin{lemma}\label{lem4}
Let  $\delta(v)^2+\omega<\frac{1}{2}(1-\theta)$ and
$v>\frac{1}{\sqrt2}e$. Then, $v_+>\frac{1}{\sqrt2}e$ and
$$\delta(v_+)\leq\frac{\sqrt{1-\delta(v)^2-\omega}\big(\theta\sqrt{n}+10\delta(v)^2+\omega\big)}
{2\sqrt{1-\theta}(2(1-\delta(v)^2-\omega)-(1-\theta))}.$$
\end{lemma}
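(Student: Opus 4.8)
The plan is to bound $\delta(v_+)$ by working first with a scaled vector whose entries are normalized to be close to $e$, and then transferring the bound to $v_+$ itself. Concretely, from the displayed computation (\ref{eq14}) with $\alpha=1$ one has
$$\frac{x_+s_+}{\mu}=v^2+vp_v+d_xd_s=e+\frac{(v^2-e)^2}{2v^2-e}+\frac{p_v^2-q_v^2}{4},$$
using (\ref{eq15}). Since $\mu^+=(1-\theta)\mu$, it follows that $v_+^2=\frac{x_+s_+}{\mu^+}=\frac{1}{1-\theta}\bigl(e+w\bigr)$, where I write $w:=\frac{(v^2-e)^2}{2v^2-e}+\frac{p_v^2-q_v^2}{4}$. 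First I would show $\|w\|_\infty<1-\theta$, which is exactly what was proved inside Lemma \ref{lem3} (the quantity $\frac{(v^2-e)^2}{2v^2-e}$ is controlled because $v>\tfrac{1}{\sqrt2}e$ and, componentwise, $0\le \frac{(v^2-e)^2}{2v^2-e}\le$ a multiple of $\delta(v)^2$, while $\|\tfrac{p_v^2-q_v^2}{4}\|_\infty\le\delta(v)^2-d_x^Td_s\le\delta(v)^2+\omega$); combining gives $\|w\|_\infty\le c\,\delta(v)^2+\omega<\tfrac12(1-\theta)<1-\theta$ for a small absolute constant $c$ (this is where the "$10\delta(v)^2$" in the final bound originates — one should keep careful track of the constant multiplying $\delta(v)^2$). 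This immediately yields $v_+^2>\tfrac{1}{1-\theta}(1-(1-\theta))e$... more simply $v_+^2\ge\frac{1}{1-\theta}(e-\|w\|_\infty e)\ge\frac{1}{1-\theta}\cdot\tfrac12(1-\theta)e=\tfrac12 e$, hence $v_+>\tfrac{1}{\sqrt2}e$ — actually strict, since the inequalities can be made strict — which is the first assertion.

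For the proximity bound, recall $\delta(v_+)=\tfrac12\bigl\|\frac{v_+-v_+^3}{2v_+^2-e}\bigr\|=\tfrac12\bigl\|\frac{v_+(e-v_+^2)}{2v_+^2-e}\bigr\|$. The key analytic step is to factor out the "distance from $e$" via $e-v_+^2=e-\frac{e+w}{1-\theta}=\frac{-\theta e - w}{1-\theta}$, so $\|e-v_+^2\|\le\frac{\theta\sqrt n+\|w\|}{1-\theta}$, and then to bound the other two factors. For the denominator, $2v_+^2-e=\frac{2(e+w)}{1-\theta}-e=\frac{(1+\theta)e+2w}{1-\theta}$, and componentwise $2v_+^2-e\ge\frac{2}{1-\theta}(1-\|w\|_\infty)-1=\frac{2(1-\|w\|_\infty)-(1-\theta)}{1-\theta}$; under the hypothesis $\|w\|_\infty\le\delta(v)^2+\omega<\tfrac12(1-\theta)$ this lower bound is positive, and it equals $\frac{2(1-\delta(v)^2-\omega)-(1-\theta)}{1-\theta}$ after replacing $\|w\|_\infty$ by the larger $\delta(v)^2+\omega$. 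For the factor $v_+$, from $v_+^2=\frac{e+w}{1-\theta}$ I get $\|v_+\|_\infty=\max_i\sqrt{(v_+^2)_i}\le\sqrt{\frac{1+\|w\|_\infty}{1-\theta}}$; more usefully, since $v_+^2\le\frac{(1+\|w\|_\infty)}{1-\theta}e$ and I want to pull out a $\sqrt{1-\delta(v)^2-\omega}$ factor to match the stated bound, I would instead use the sharper estimate $v_+^2\le\frac{1+\delta(v)^2+\omega}{1-\theta}e$ together with $1+\delta(v)^2+\omega\le\frac{1-\theta}{1-\delta(v)^2-\omega}$ ... this last step needs checking, and it is where the numerator's $\sqrt{1-\delta(v)^2-\omega}$ term and the denominator's $\sqrt{1-\theta}$ come from. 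Assembling the three factors and using $\|xy\|\le\|x\|_\infty\|y\|$ and $\|x\|\le\sqrt n\,\|x\|_\infty$ where appropriate gives precisely
$$\delta(v_+)\le\frac{1}{2}\cdot\sqrt{\tfrac{1+\delta(v)^2+\omega}{1-\theta}}\cdot\frac{\theta\sqrt n+\|w\|}{1-\theta}\cdot\frac{1-\theta}{2(1-\delta(v)^2-\omega)-(1-\theta)},$$
and then bounding $\|w\|\le\sqrt n\,\|w\|_\infty$ together with the constant-tracking $\|w\|_\infty\le 10\delta(v)^2+\omega$... no, $\|w\|$ should absorb the $\theta\sqrt n$ additively as $\theta\sqrt n + (\text{const})\delta(v)^2 + \omega$; combining yields the claimed expression.

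The main obstacle I anticipate is not the structure of the argument — which is the standard "write $v_+^2$ as a perturbation of $e/(1-\theta)$, bound numerator and denominator of the proximity function separately" pattern — but the bookkeeping of absolute constants, in particular showing that $\|w\|_\infty$, which contains the curvature term $\frac{(v^2-e)^2}{2v^2-e}$ coming from the centering of the current point, can be bounded by something of the form $10\delta(v)^2+\omega$. This requires a componentwise inequality relating $\frac{(v^2-e)^2}{2v^2-e}$ to $\delta(v)^2$ that exploits $v>\tfrac{1}{\sqrt2}e$ to control $2v^2-e$ from below; getting the constant $10$ (rather than something larger) is the delicate part and is presumably why the authors isolate the case $v>\tfrac{1}{\sqrt2}e$ and the regime $\delta(v)^2+\omega<\tfrac12(1-\theta)$. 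A secondary subtlety is making sure every use of $\|\cdot\|$ versus $\|\cdot\|_\infty$ and every application of $\|x\|\le\sqrt n\|x\|_\infty$ is placed so that exactly one factor of $\sqrt n$ (attached to $\theta$) survives in the final bound.
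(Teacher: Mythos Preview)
Your overall architecture---write $v_+^2=(e+w)/(1-\theta)$, bound $\|e-v_+^2\|$, then bound the prefactor $v_+/(2v_+^2-e)$---is the same as the paper's, but two concrete steps go astray.

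First, and most seriously, your plan to obtain the numerator factor $\sqrt{1-\delta(v)^2-\omega}$ by bounding $v_+$ \emph{from above} does not work. The inequality you flag as ``needs checking'', namely $1+\delta(v)^2+\omega\le(1-\theta)/(1-\delta(v)^2-\omega)$, is equivalent to $(\delta(v)^2+\omega)^2\ge\theta$ and is false in general. The paper avoids this entirely: it observes that $g(t)=t/(2t^2-1)$ is strictly decreasing on $(1/\sqrt2,\infty)$ (since $g'(t)=-(2t^2+1)/(2t^2-1)^2<0$), so the componentwise maximum of $v_+/(2v_+^2-e)$ is $g(\min(v_+))$, and hence a \emph{lower} bound on $\min(v_+)$ is what produces the factor. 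From $v_+^2\ge(e-q_v^2/4)/(1-\theta)$ one gets $\min(v_+)\ge\sqrt{(1-\delta(v)^2-\omega)/(1-\theta)}$, and plugging this into $g$ yields exactly $\sqrt{(1-\theta)(1-\delta(v)^2-\omega)}\big/\bigl(2(1-\delta(v)^2-\omega)-(1-\theta)\bigr)$. Bounding numerator and denominator separately, as you propose, would put $\sqrt{1+\|w\|_\infty}>1$ in the numerator rather than $\sqrt{1-\delta(v)^2-\omega}<1$, and you could not reach the stated bound.

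Second, the constant $10$ that you anticipate as ``the delicate part'' falls out of a clean algebraic identity you are missing. Since $p_v=v(e-v^2)/(2v^2-e)$, one has
\[
\frac{(v^2-e)^2}{2v^2-e}+\frac{p_v^2}{4}=\frac{9v^2-4e}{v^2}\cdot\frac{p_v^2}{4},
\]
so $v_+^2=\bigl(e+\tfrac{9v^2-4e}{v^2}\tfrac{p_v^2}{4}-\tfrac{q_v^2}{4}\bigr)/(1-\theta)$. Using $0<\tfrac{9v^2-4e}{v^2}<9e$ (from $v>\tfrac{1}{\sqrt2}e$) gives both the lower bound $v_+^2\ge(e-q_v^2/4)/(1-\theta)$ needed above and the norm bound $\|e-v_+^2\|\le(1-\theta)^{-1}\bigl(\theta\sqrt n+9\tfrac{\|p_v\|^2}{4}+\tfrac{\|q_v\|^2}{4}\bigr)$; together with $\tfrac{\|p_v\|^2}{4}=\delta(v)^2$ and $\tfrac{\|q_v\|^2}{4}=\delta(v)^2-d_x^Td_s\le\delta(v)^2+\omega$ this is $(1-\theta)^{-1}(\theta\sqrt n+10\delta(v)^2+\omega)$. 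No separate componentwise estimate of $(v^2-e)^2/(2v^2-e)$ is needed.
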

\begin{proof}
Let $\alpha=1$. Then from (\ref{eq14}) it follows that
\begin{eqnarray*}
v_+^2=\frac{x_+s_+}{\mu^+}=\frac{v^2+vp_v+\frac{p_v^2}{4}-\frac{q_v^2}{4}}{1-\theta}=
\frac{e+\frac{(v^2-e)^2}{2v^2-e}+\frac{p_v^2}{4}-\frac{q_v^2}{4}}{1-\theta}\\
=\frac{e+\big(\frac{9v^2-4e}{v^2}\big)\frac{p_v^2}{4}-\frac{q_v^2}{4}}{1-\theta}
\geq\frac{e-\frac{q_v^2}{4}}{1-\theta},
\end{eqnarray*}
where the second equality is due to (\ref{eq15}) and the inequality
follows from the fact that $9v^2-4e\geq0.5e>0$. Consequently, we
have
\begin{eqnarray}\label{eq17}
\min(v_+)\geq\sqrt{\frac{1-\frac{1}{4}\|q_v\|^2_{\infty}}{1-\theta}}\geq\sqrt{\frac{1-\frac{1}{4}\|q_v\|^2}{1-\theta}}
\geq\sqrt{\frac{1-\delta(v)^2-\omega}{1-\theta}},
\end{eqnarray}
where the last inequality follows from (\ref{eq13}) and the
Cauchy-Schwarz inequality.

From $\delta(v)^2+\omega<\frac{1}{2}(1-\theta)$ it follows that
$\min(v_+)>\frac{1}{\sqrt2}$, hence $v_+>\frac{1}{\sqrt2}e$. Now, we
have
\begin{eqnarray}\label{eq18}
\delta(v_+)=\frac{1}{2}\Big\|\frac{v_+-v_+^3}{2v^2_+-e}\Big\|=\frac{1}{2}\Big\|\frac{v_+}{2v^2_+-e}\big(e-v_+^2\big)\Big\| ~~~~~~~~\nonumber\\
\leq\frac{\min(v_+)}{2(2\min(v_+)^2-1)}\big\|e-v_+^2\big\|~~~~~~~~~~~~~~~~~~~~\nonumber\\
\leq\frac{\sqrt{(1-\theta)(1-\delta(v)^2-\omega)}}{2(2(1-\delta(v)^2-\omega)-(1-\theta))}\big\|e-v_+^2\big\|.~~~~~
\end{eqnarray}
On the other hand, one has
\begin{eqnarray*}
\big\|e-v_+^2\big\|=\Big\|\frac{e+\big(\frac{9v^2-4e}{v^2}\big)\frac{p_v^2}{4}-\frac{q_v^2}{4}}{1-\theta}-e\Big\|~~~~~~~~~~~~~~~~~\\
\leq\frac{1}{1-\theta}\Big(\|\theta
e\|+\Big\|\Big(\frac{9v^2-4e}{v^2}\Big)\frac{p_v^2}{4}-\frac{q_v^2}{4}\Big\|\Big)\\
\leq\frac{1}{1-\theta}\Big(\theta\sqrt{n}+9\frac{\|p_v\|^2}{4}+\frac{\|q_v\|^2}{4}\Big)~~~~~~~~~\\
=\frac{1}{1-\theta}\big(\theta\sqrt{n}+10\delta(v)^2+\omega\big).~~~~~~~~~~~~~~
\end{eqnarray*}
Substituting this bound into (\ref{eq18}) gives us exactly the
desired result. Thus, the proof is completed.
\end{proof}
\subsection{Upper bound for $\omega$}
Following \cite{NewRoos}, let $\mathcal{N}:=\{\zeta: {\bar
A}\zeta=0\}$ denote the null space of the matrix $\bar A$. Then, the
 $\{\zeta: {\bar A}\zeta=\theta\nu r^0_b\}$ affine space equals
$\mathcal{N}+d_x$. Since the row space of $\bar A$ is the orthogonal
complement ${\mathcal{N}}^\perp$ of $\mathcal{N}$, thus
$d_s\in\theta\nu vs^{-1}r^0_c+{\mathcal{N}}^\perp.$ Also note that
$\mathcal{N}\cap{\mathcal{N}}^\perp=\{0\}$, and the affine spaces
$\mathcal{N}+d_x$ and ${\mathcal{N}}^\perp+d_s$ meet in a unique
point $q$. Applying a similar argument to Lemma 3.4 in
\cite{NewRoos}, we can find
\begin{eqnarray}\label{eq19}
2\omega\leq\|q\|^2+\bigg(\|q\|+\Big\|\frac{v-v^3}{2v^2-e}\Big\|\bigg)^2=\|q\|^2+\big(\|q\|+2\delta(v)\big)^2.
\end{eqnarray}
Again from \cite{NewRoos}, we have
\begin{eqnarray}\label{eq20}
\|q\|\leq\frac{\theta\big(n+\|v\|^2\big)}{\min(v)}.
\end{eqnarray}
By definition (\ref{eq11}) , we have
\begin{eqnarray*}
2\delta(v)=\Big\|\frac{v-v^3}{2v^2-e}\Big\|=\Big\|\frac{v^2+v}{2v^2-e}(e-v)\Big\|
\geq\frac{1}{2}\big\|e-v\big\|\geq\frac{1}{2}\big(\|v\|-\|e\|\big),
\end{eqnarray*}
which implies
$$\|v\|\leq\|e\|+4\delta(v)=\sqrt{n}+4\delta(v).$$
Furthermore, we have
$$4\delta(v)\geq\|e-v\|\geq|1-v_i|, i=1, \ldots, n.$$
This gives $\min(v)\geq 1-4\delta(v)$. Combining these two
inequalities with (\ref{eq20}), we will get
\begin{eqnarray}\label{eq21}
\|q\|\leq\frac{\theta\Big(n+\big(\sqrt{n}+4\delta(v)\big)^2\Big)}{1-4\delta(v)}.
\end{eqnarray}


\subsection{Values for $\theta$ and $\tau$}
In this section, we require finding values $\theta$ and $\tau$ such
that if $\delta(v)\leq\tau$ holds, then $\delta(v_+)\leq\tau$.  From
Lemma \ref{lem4}, it suffices to have
\begin{eqnarray}\label{eq22}
\frac{\sqrt{1-\delta(v)^2-\omega}\big(\theta\sqrt{n}+10\delta(v)^2+\omega\big)}
{2\sqrt{1-\theta}(2(1-\delta(v)^2-\omega)-(1-\theta))}\leq\tau,
\end{eqnarray}
provided that $\delta(v)^2+\omega<\frac{1}{2}(1-\theta)$. One can
easily see the right-hand-side of (\ref{eq21}) is monotonically
increasing with respect to $\delta(v)<1$. Hence, invoking
$\delta(v)\leq\tau$, we have
\begin{eqnarray*}
\|q\|\leq\frac{\theta\Big(n+\big(\sqrt{n}+4\tau\big)^2\Big)}{1-4\tau}.
\end{eqnarray*}
By substituting the above result into (\ref{eq19}) and using again
$\delta(v)\leq\tau$, we obtain
$$\omega\leq\frac{1}{2}\bigg[\bigg(\frac{\theta\big(n+\big(\sqrt{n}+4\tau\big)^2\big)}{1-4\tau}\bigg)^2
+\bigg(\frac{\theta\big(n+\big(\sqrt{n}+4\tau\big)^2\big)}{1-4\tau}+2\tau\bigg)^2\bigg]=:f(\tau).$$
We claim that
\begin{eqnarray}\label{eq23}
\chi(t):=\frac{\sqrt{1-t}}{2(1-t)-(1-\theta)}, ~~ 0\leq
t\leq\frac{1}{2}(1-\theta),
\end{eqnarray}
is increasing. Hence, $0\leq\delta(v)^2+\omega\leq\tau^2+f(\tau)$
implies $\chi(\delta(v)^2+\omega)\leq\chi(\tau^2+f(\tau))$.
Therefore, $\delta(v)^2+\omega\leq\frac{1}{2}(1-\theta)$ and
(\ref{eq22}) will certainly hold if
$$\tau^2+f(\tau)\leq\frac{1}{2}(1-\theta),~~~y(\tau):=\frac{\chi(\tau^2+f(\tau))(\theta\sqrt{n}+10\tau^2+f(\tau))}{2\sqrt{1-\theta}}\leq\tau.$$
If we take $\tau=\frac{1}{12}$ and $\theta=\frac{1}{22n}, n\geq4$,
then $\tau^2+f(\tau)=0.0645<0.4773\leq\frac{1}{2}(1-\theta)$ and
$y(\tau)\leq0.0827<\frac{1}{12}$. Hence, we may state the following
result.
\begin{lemma}\label{lem7}
If $\tau=\frac{1}{12}$ and $\theta=\frac{1}{22n}, n\geq4$, then
$\delta(v)\leq\tau$ implies $\delta(v_+)\leq\tau.$
\end{lemma}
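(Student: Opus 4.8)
The plan is to feed the worst-case estimates obtained above into the bound of Lemma~\ref{lem4}, reduce the inductive step $\delta(v)\le\tau\Rightarrow\delta(v_+)\le\tau$ to two scalar inequalities in $\tau$ and $\theta$, and then verify that the prescribed values satisfy both. Apart from establishing the monotonicity claim (\ref{eq23}) --- which is stated but not proved above --- no new analytic input is required; the rest is an assembly of inequalities together with a numerical check.

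First I would bound how large $\delta(v)^2+\omega$ can be under the hypothesis $\delta(v)\le\tau$ (keeping the standing assumption $v>\tfrac1{\sqrt2}e$, so that Lemma~\ref{lem4} applies). Since the right-hand side of (\ref{eq21}) is increasing in $\delta(v)$ on $[0,\tfrac14)$ and $\tau=\tfrac1{12}<\tfrac14$, the bound on $\|q\|$ is maximal at $\delta(v)=\tau$; substituting that value and $\delta(v)\le\tau$ into (\ref{eq19}) gives $\omega\le f(\tau)$. Consequently $\delta(v)^2+\omega\le\tau^2+f(\tau)$ and $\theta\sqrt n+10\delta(v)^2+\omega\le\theta\sqrt n+10\tau^2+f(\tau)$.

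Next I would prove claim (\ref{eq23}), namely that $\chi(t)=\sqrt{1-t}\,/\,(2(1-t)-(1-\theta))$ is increasing on $[0,\tfrac12(1-\theta)]$. Writing the denominator as $1+\theta-2t$, which is positive on that interval and equals $2\theta$ at the right endpoint, a direct differentiation gives $\chi'(t)=\frac{3-\theta-2t}{2\sqrt{1-t}\,(1+\theta-2t)^2}$; for $t\le\tfrac12(1-\theta)$ we have $2t\le1-\theta$, whence $3-\theta-2t\ge2>0$ and so $\chi'(t)>0$. Granting for the moment that $\tau^2+f(\tau)\le\tfrac12(1-\theta)$, both $\delta(v)^2+\omega$ and $\tau^2+f(\tau)$ lie in $[0,\tfrac12(1-\theta)]$, hence $\chi(\delta(v)^2+\omega)\le\chi(\tau^2+f(\tau))$; in particular the hypothesis $\delta(v)^2+\omega<\tfrac12(1-\theta)$ of Lemma~\ref{lem4} is met, and that lemma combined with the previous paragraph yields $\delta(v_+)\le y(\tau)$, with $y(\tau)$ as displayed just before the statement.

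It then remains to check, for $\tau=\tfrac1{12}$ and $\theta=\tfrac1{22n}$ with $n\ge4$, the two scalar inequalities $\tau^2+f(\tau)\le\tfrac12(1-\theta)$ and $y(\tau)\le\tau$. This is precisely the computation recorded before the statement, which gives $\tau^2+f(\tau)=0.0645<0.4773\le\tfrac12(1-\theta)$ and $y(\tau)\le0.0827<\tfrac1{12}=\tau$; the first inequality also legitimizes the use of the monotonicity of $\chi$ in the previous paragraph. Combining everything, $\delta(v_+)\le\tau$, which is the assertion. The only genuinely new analytic ingredient is the one-line sign computation for $\chi'$; the main obstacle is bookkeeping in this last step, where one must track carefully how $f(\tau)$, $\theta\sqrt n$, $y(\tau)$ and $\tfrac12(1-\theta)$ depend on $n$ (using, e.g., $\sqrt n\le\tfrac n2$ for $n\ge4$) so that finitely many evaluations settle the whole range $n\ge4$.
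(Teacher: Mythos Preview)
Your proposal is correct and follows essentially the same route as the paper: bound $\|q\|$ and then $\omega$ via (\ref{eq21}) and (\ref{eq19}), use the monotonicity claim (\ref{eq23}) to replace $\delta(v)^2+\omega$ by $\tau^2+f(\tau)$ in the bound of Lemma~\ref{lem4}, and then verify the two resulting scalar inequalities numerically for $\tau=\tfrac{1}{12}$, $\theta=\tfrac{1}{22n}$, $n\ge4$. Your explicit derivative computation for $\chi$ and your remark about controlling the $n$-dependence in the final check go slightly beyond what the paper spells out, but the overall argument is the same.
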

\subsection{Complexity analysis}
Lemma \ref{lem7} establishes the proposed algorithm is well-defined,
in the sense that the property  $\delta(x, s;
\mu):=\delta(v)\leq\tau$ is maintained in all iterations.

In each main iteration, both the barrier parameter $\mu$ and the
norms of the residual vectors are reduced by the factor $1-\theta$.
Hence, the total number of main iterations is bounded above by
$$\frac{1}{\theta}\log\frac{\max\{n\xi^2, \|r_b^0\|, \|r_c^0\|\}}{\varepsilon}.$$
Now, we state our main result.
\begin{theorem}\label{thr}
If {\rm(P)} and {\rm(D)} are feasible and $\xi>0$ such that $\|(x^*;
s^*)\|_{\infty}\leq\xi$ for some optimal solutions $x^*$ of {\rm(P)}
and $(y^*, s^*)$ of {\rm(D)}, then after at most
$$22 n\log\frac{\max\{n\xi^2, \|r_b^0\|,
\|r_c^0\|\}}{\epsilon}$$ iterations, the algorithm finds an
$\epsilon$-optimal solution of {\rm(P)} and {\rm(D)}.
\end{theorem}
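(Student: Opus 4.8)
The plan is to combine the per-iteration guarantee of Lemma~\ref{lem7} with the iteration-count bound just derived, and then to check that the stopping criterion of Algorithm~1 actually delivers an $\epsilon$-optimal solution. First I would invoke Lemma~\ref{lem1} and the assumption that (P) and (D) are feasible with the prescribed $\xi$ to guarantee that the perturbed pair $(P_\nu)$, $(D_\nu)$ satisfies the IPC for every $\nu\in(0,1]$, so that the central path and hence the full-Newton step of the algorithm are well-defined throughout. Starting from $(x^0,y^0,s^0)=\xi(e,0,e)$ with $\nu=1$ we have $v=e$ (since $\mu^0=\xi^2$), so $\delta(v)=0\le\tau$; therefore the induction hypothesis of Lemma~\ref{lem7} holds at the first iteration, and by Lemma~\ref{lem7} it is preserved, i.e.\ $\delta(v)\le\tau=\tfrac1{12}$ at every iteration. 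In particular every iterate is strictly feasible for the current perturbed pair (using Lemma~\ref{lem3}, whose hypothesis $\delta(v)^2+\omega<1$ is implied by $\delta(v)^2+\omega<\tfrac12(1-\theta)$, which the parameter choice guarantees).

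Next I would track the decrease of the natural residuals. By construction each main iteration multiplies both $\mu$ and $\nu$ by $1-\theta$; since the algorithm produces a feasible point of $(P_\nu)$, $(D_\nu)$ at parameter value $\nu$, the primal residual $r_b=b-Ax$ equals $\nu r_b^0$ and the dual residual $r_c=c-A^Ty-s$ equals $\nu r_c^0$, so after $k$ iterations $\|r_b\|=(1-\theta)^k\|r_b^0\|$ and $\|r_c\|=(1-\theta)^k\|r_c^0\|$. For the duality gap, from $\delta(v)\le\tau$ and the bound $\|v\|\le\sqrt n+4\delta(v)$ established before (\ref{eq21}) one gets $x^Ts=\mu\|v\|^2\le\mu(\sqrt n+4\tau)^2$; with $\mu=(1-\theta)^k\mu^0=(1-\theta)^k n\xi^2\cdot\frac{(\sqrt n+4\tau)^2}{n}$ — more cleanly, $x^Ts\le(1-\theta)^k(\sqrt n+4\tau)^2\xi^2$, which for $\tau=\tfrac1{12}$ is at most a modest constant times $n\xi^2$. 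Hence $\max\{x^Ts,\|r_b\|,\|r_c\|\}\le(1-\theta)^k\,C\max\{n\xi^2,\|r_b^0\|,\|r_c^0\|\}$ for an absolute constant $C$; this is $\le\epsilon$ as soon as $(1-\theta)^k\le\epsilon/(C\cdot\max\{n\xi^2,\|r_b^0\|,\|r_c^0\|\})$, i.e.\ after $k\ge\frac1\theta\log\frac{C\max\{n\xi^2,\|r_b^0\|,\|r_c^0\|\}}{\epsilon}$ iterations, using $-\log(1-\theta)\ge\theta$.

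Finally I would substitute $\theta=\frac1{22n}$ to convert $\frac1\theta$ into $22n$, absorbing the constant $C$ into the logarithm (or simply noting, as the paper does, that the duality-gap factor $(\sqrt n+4\tau)^2\le n$-type estimates already fold into the displayed $\max\{n\xi^2,\dots\}$), which yields the stated bound $22n\log\frac{\max\{n\xi^2,\|r_b^0\|,\|r_c^0\|\}}{\epsilon}$. The only genuinely non-routine point is the bookkeeping that shows the quantity being driven below $\epsilon$ by the $(1-\theta)^k$ factor is exactly (up to the harmless constant swept into the $\max$) the triple $\max\{x^Ts,\|r_b\|,\|r_c\|\}$ appearing in the while-loop: this requires the invariant $\delta(v)\le\tau$ to control $x^Ts$ and requires knowing the exact feasibility of the iterates for the perturbed problems to identify $\|r_b\|,\|r_c\|$ with $(1-\theta)^k\|r_b^0\|,(1-\theta)^k\|r_c^0\|$. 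Everything else is the arithmetic of $\frac1\theta\log(\cdot)$, which the paper has in effect already carried out in the sentence preceding the theorem.
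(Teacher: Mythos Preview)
Your proposal is correct and follows essentially the same approach as the paper, which merely records (in the sentence preceding the theorem) that $\mu$ and the residual norms shrink by the factor $1-\theta$ each iteration and then substitutes $\theta=\tfrac{1}{22n}$; you simply flesh out the bookkeeping (the initial $\delta(v)=0$, the inductive use of Lemma~\ref{lem7}, and the bound $x^Ts=\mu\|v\|^2\le\mu(\sqrt{n}+4\tau)^2$) that the paper leaves implicit. Your handling of the extra constant in the duality-gap bound is in fact more careful than the paper's, which silently folds it into the $n\xi^2$ term.
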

\section{Conclusions}
The method presented in this paper is a full-Newton step IIPM for LO
based on the AET proposed in \cite{Takacs}. The method is used in
each iteration only one feasibility step. Our method analysis is
different from the existing IIPMs based on the AET because it uses a
different AET. The obtained complexity bound coincides with the
current best-known theoretical iteration bound for IIPMs.


\end{document}